\author{Peter J. Brockwell \thanks{Department of Statistics Colorado State University,
USA,
email: pjbrock@stat.colostate.edu }
\and
Vincenzo Ferrazzano
\thanks{Center for Mathematical Sciences, Technische Universit\"at M\"unchen,  85748 Garching b. M\"unchen, Germany,
email: ferrazzano@ma.tum.de, http://www-m4.ma.tum.de/pers/ferrazzano/}
\and
Claudia Kl\"uppelberg\thanks{Center for Mathematical Sciences, and Institute for Advanced Study, Technische Universit\"at M\"unchen,  85748 Garching b. M\"unchen, Germany,
email: cklu@ma.tum.de, http://www-m4.ma.tum.de}
}
\title{High frequency sampling of a continuous-time ARMA process}
\numberwithin{equation}{section}
\newtheorem{thm}{Theorem}[section]
\newtheorem{cor}[thm]{Corollary}
\newtheorem{lem}[thm]{Lemma}
\newtheorem{prop}[thm]{Proposition}
\newtheorem{defn}[thm]{Definition}
\newtheorem{example}[thm]{Example}
\newtheorem{oss}[thm]{Remark}
\newtheorem{fig}[thm]{Figure}
\newcommand{\displayfrac}[2]{\frac{\displaystyle #1}{\displaystyle #2}}
\newcommand{\bthe}{\begin{thm}}
\newcommand{\ethe}{\end{thm}}
\newcommand{\ble}{\begin{lem}}
\newcommand{\ele}{\end{lem}}
\newcommand{\bde}{\begin{defn}}
\newcommand{\ede}{\end{defn}}
\newcommand{\bco}{\begin{cor}}
\newcommand{\eco}{\end{cor}}
\newcommand{\bpr}{\begin{prop}}
\newcommand{\epr}{\end{prop}}
\newcommand{\bproof}{\begin{proof}}
\newcommand{\eproof}{\end{proof}}
\newcommand{\bexam}{\begin{example}\rm}
\newcommand{\eexam}{\halmos\end{example}}
\newcommand{\brem}{\begin{oss}\rm}
\newcommand{\erem}{\halmos\end{oss}}
\newcommand{\bfi}{\begin{fig}}
\newcommand{\efi}{\end{fig}}
\newcommand{\beao}{\begin{eqnarray*}}
\newcommand{\eeao}{\end{eqnarray*}\noindent}
\newcommand{\beam}{\begin{eqnarray}}
\newcommand{\eeam}{\end{eqnarray}\noindent}
\newcommand{\barr}{\begin{array}}
\newcommand{\earr}{\end{array}}
\newcommand{\beq}{\begin{equation}}
\newcommand{\eeq}{\end{equation}}
\def\bbr{{\Bbb R}}
\def\bbn{{\Bbb N}}
\def\bbz{{\Bbb Z}}
\def\bbe{{\Bbb E}}
\newcommand{\dsum}{\displaystyle\sum}
\newcommand{\la}{{\lambda}}
\newcommand{\ga}{{\gamma}}
\newcommand{\si}{{\sigma}}
\newcommand{\CARMA}{{\rm CARMA}}
\newcommand{\CAR}{{\rm CAR}}
\def\halmos{\hfill $\Box$  \medskip }
\begin{document}
\maketitle

\begin{abstract}
Continuous-time autoregressive moving average (CARMA) processes have recently been used
widely in the modeling of non-uniformly spaced data and as a tool for dealing with high-frequency
data of the form $Y_{n\Delta}, n=0,1,2,\ldots$, where $\Delta$ is small and positive.  Such data
occur in many fields of application, particularly in finance and the study of turbulence.  This
paper is concerned with the characteristics of the process $(Y_{n\Delta})_{n\in\bbz}$, when $\Delta$ is small
and the underlying continuous-time process $(Y_t)_{t\in\bbr}$ is a specified CARMA process.
\end{abstract}

\bigskip

\noindent
\begin{tabbing}
{\em AMS 2000 Subject Classifications:} 60G51, 62M10.
\end{tabbing}

\medskip

\noindent {\em Keywords:} CARMA process, high frequency data, discretely sampled process

\bigskip

\section{Introduction}

Throughout this paper we shall be concerned with a CARMA process driven by a second-order zero-mean L\'evy
process $L$ with $EL_1=0$ and $EL_1^2=\sigma^2$.  The process is defined as follows.

For non-negative integers $p$ and $q$ such that $q<p$, a $\CARMA(p,q)$ process $Y=(Y_t)_{{t\in\bbr}}$, with
coefficients
$a_{1},\ldots,a_{p}$, $b_{0},\ldots,b_{q}\in\bbr$, and driving L\'evy process $L$,  is defined to be a
strictly stationary solution of the
suitably interpreted formal equation,
\begin{equation}\label{1.1}
a(D)Y_t=b(D)DL_t,\quad t\in\bbr,
\end{equation}
where $D$ denotes differentiation with respect to $t$, $a(\cdot)$ and $b(\cdot)$ are the polynomials,
$$a(z):=z^{p}+a_{1}z^{p-1}+\cdots+a_{p}\quad\mbox{and}\quad
b(z):=b_{0}+b_{1}z+\cdots+b_{p-1}z^{p-1},$$
and the
coefficients $b_{j}$ satisfy $b_{q}=1$ and $b_{j}=0$ for $q<j<p$.
The polynomials $a(\cdot)$ and $b(\cdot)$ are assumed to have no common zeroes, and it will be assumed that the zeroes of the polynomial $a$ all lie in the interior of the left half of the complex plane.

Since the derivative $DL_t$ does not exist in the usual
sense, we interpret~(\ref{1.1}) as being equivalent to the
observation and state equations
\begin{equation}\label{1.2}
Y_t=\mathbf{b}^{T}\mathbf{X}_t\,,
\end{equation}
\begin{equation}\label{1.3}
d\mathbf{X}_t=A\mathbf{X}_tdt+\mathbf{e}_{p}dL_t\,,
\end{equation}
where $$\mathbf{X}_t=\left(
          \begin{array}{c}
            X(t) \\
            X^{(1)}(t) \\
            \vdots \\
            X^{(p-2)}(t) \\
            X^{(p-1)}(t) \\
          \end{array}
        \right),\quad
       \mathbf{b}=\left(
       \begin{array}{c}
         b_{0} \\
         b_{1} \\
         \vdots \\
         b_{p-2} \\
         b_{p-1} \\
       \end{array}
     \right),\quad
     \mathbf{e}_{p}=\left(
       \begin{array}{c}
         0 \\
         0 \\
         \vdots \\
         0 \\
         1 \\
       \end{array}
     \right),$$
     $$\hskip .9in A=\left(
        \begin{array}{ccccc}
          0 & 1 & 0 & \ldots & 0 \\
          0 & 0 & 1 & \ldots & 0 \\
          \vdots & \vdots & \vdots &   \ddots & \vdots \\
          0 & 0 & 0 & \ldots & 1 \\
         -a_{p} & -a_{p-1} & -a_{p-2} & \ldots & -a_{1} \\
        \end{array}
      \right)\quad \text{and $A=-a_{1}$ for $p=1$}.$$
It is easy to check that the eigenvalues of the matrix $A$, which we shall denote by $\la_1, . . . , \la_p$, are the same as the zeroes of the autoregressive polynomial $a(\cdot)$.

Under the conditions specified it has been shown (\cite{BrLi}, Lemma~2.3) that these equations have the unique strictly stationary solution,
\begin{equation}\label{1.4}
Y_t=\int_{-\infty}^\infty g(t-u)dL_u,
\end{equation}
where
\begin{equation} \label{jordan}g(t)=\begin{cases}{\displayfrac{1}{2\pi i}}\displaystyle\int_{\rho}{\displayfrac{b(z)}{a(z)}e^{tz}dz}=\dsum_\lambda Res_{z=\lambda}\left(e^{zt}\displayfrac{b(z)}{a(z)}\right), ~&{\rm if}~t>0,\cr
                                                             0, ~&{\rm if}~ t\le 0.\end{cases}
\end{equation}
and $\rho$ is any simple closed curve in the open left half of the complex plane encircling the zeroes of $a(\cdot)$.  The sum  is over the distinct zeroes $\lambda$ of $a(\cdot)$ and $Res_{z=\lambda}(\cdot)$ denotes the residue at $\lambda$ of the function in parentheses.
Evaluating these residues, we can write $g$ more explicitly as
\begin{equation}\label{jordan:2}
g(t)=\sum_\lambda {{1}\over{(m(\lambda)-1)!}}\left[D_z^{m(\lambda)-1}
\left((z-\lambda)^{m(\lambda)}e^{zt}b(z)/a(z)\right)\right]_{z=\lambda}{\bf 1}_{(0,\infty)}(t),
\end{equation}where $m(\lambda)$ denotes the multiplicity of the zero $\lambda$ and $D_z$ denotes differentiation with respect to $z$.  The kernel $g$ can also be expressed (\cite{BrLi}, equations (2.10) and (3.7)) as
\begin{equation}\label{jordan:0}
g(t)=
\mathbf{b^\top} e^{At}\mathbf{e}_{p}{\bf 1}_{(0,\infty)}(t).
\end{equation}
From this equation we see at once that $g$ is infinitely differentiable on $(0,\infty)$ with $k^{\rm th}$ derivative,
$$g^{(k)}(t)=\mathbf{b^\top} e^{At}A^k\mathbf{e}_{p}, ~0<t<\infty.$$
Since $b_q=1$ and $b_j=0$ for $j>q$,  the right derivatives $g^{(k)}(0+)$ satisfy
\begin{equation}\label{value:D:carma}g^{(k)}(0+)=\mathbf{b^\top}A^k\mathbf{e}_{p}= \begin{cases}0&{\rm if}~ k<p-q-1,\cr
                                                                                                                1&{\rm if}~k=p-q-1,\end{cases}\end{equation}
and in particular $g(0+)=1$ if $p-q=1$ and $g(0+)=0$ if $p-q>1$.

Gaussian CARMA processes, of which the Gaussian Ornstein-Uhlenbeck process is an early example,  were first studied
in detail  by \cite{Doob:1944} (see also \cite{Doob:1990df}).  The state-space formulation, (1.2) and (1.3)  (with
$\mathbf{b^\top}=[1~0~\cdots~0]$) was
used by \cite{jones2} to carry out inference for time series with irregularly-spaced observations.  This formulation
leads naturally to the definition of L\'evy-driven and non-linear CARMA processes (see \cite{brockwell5} and the references therein).  Fractionally integrated L\'evy-driven  CARMA processes were studied by \cite{ BroMar05}.

L\'evy-driven CARMA processes  have been applied successfully to the modelling of stochastic volatility  in finance (see \cite{todorov:tauchen:2006},  \cite{BCL:2006} and  \cite{HaCz:2007}), extending the celebrated  Ornstein-Uhlenbeck model of \cite{barndorff8}.
{The results presented here were motivated by preliminary  studies of high-frequency turbulence data (see \cite{techEnzo}) which appear to be well-fitted by a continuous time moving average process sampled at times $0,\Delta,2\Delta,\ldots$, where $\Delta$ is small and positive.  We return to this topic in \cite{bfk:2011:2}}. The application to turbulence data will be investigated in detail in \cite{fk:2011:1}.


Our paper is organised as follows. In Section~\ref{s2}  we derive an expression for the spectral density of the sampled sequence  $Y^\Delta:=(Y_{n\Delta})_{n\in\bbz}$.  It is known that the filtered process $(\phi(B)Y^\Delta_n)_{n\in\bbz}$,
where $\phi(B)$ is the filter defined in (3.1),  is a moving average of order at most $p-1$.  In Section~\ref{s3}, we
determine the asymptotic behaviour of the spectral density and autocovariance function of  $(\phi(B)Y^\Delta_n)_{n\in\bbz}$ as $\Delta\downarrow 0$ and the asymptotic moving average coefficients and white noise variance in the cases $p-q=1, 2$ and $3$.  In general we show that for small enough $\Delta$ the order of the moving average $(\phi(B)Y^\Delta_n)_{n\in\bbz}$ is $p-1$.

\section{The spectral density of $Y^\Delta:=(Y_{n\Delta})_{n\in\bbz}$}\label{s2}


From \eqref{jordan} we immediately see, since $g(t)=0$ for $t<0$, that the Fourier transform of $g$ is
\begin{equation}\label{f:tran}
\tilde g(\omega):=\int_\bbr g(t)e^{i\omega t}dt
=-{{1}\over{2\pi i}}\int_\rho {{b(z)}\over{a(z)}}{{1}\over{z+i\omega}}dz
 ={{b(-i\omega)}\over{a(-i\omega)}}, \quad\omega\in\bbr.
\end{equation}
Since the autocovariance function $\gamma_Y(\cdot)$ is the convolution of $\sigma g(\cdot)$ and $\sigma g(-\cdot)$,
its Fourier transform is given by
$${\tilde\gamma}_Y(\omega)=\sigma^2{\tilde g}(\omega){\tilde g}(-\omega)=\sigma^2\left|{{b(i\omega)}\over{a(i\omega)}}\right|^2, ~~\omega\in \bbr.$$
The spectral density of $Y$ is the inverse Fourier transform of $\gamma_Y$.  Thus
$$
f_Y(\omega)=\frac{1}{2\pi}\int_\bbr  e^{-i\omega h}\gamma_Y(h) dh =\frac{1}{2\pi}{\tilde\gamma}_Y(-\omega)= \frac{\sigma^2}{2\pi} \left|\frac{b(i\omega)}{a(i\omega)}\right|^2,\quad \omega\in\bbr.
$$
Substituting this expression into the relation
$$\ga_Y(h)=\int_\bbr  e^{i\omega h}f_Y(\omega)d\omega,\quad h\in\bbr,$$
and changing the variable of integration from $\omega$ to $z=i\omega$ gives,
\begin{equation}\label{cov:sam}\gamma_Y(h)={{\sigma^2}\over{2\pi i}}\int_\rho{{b(z)b(-z)}\over{a(z)a(-z)}}e^{|h|z}dz=\sigma^2\sum_\lambda Res_{z=\lambda}\left(\frac{b(z)b(-z)}{a(z)a(-z)}e^{z|h|}\right),\end{equation}
where the sum is again over the distinct zeroes of $a(\cdot)$.


We can now compute the spectral density of the sampled sequence $Y^\Delta:=(Y_{n\Delta})_{n\in\bbz}$.  This spectral density
$f_{\Delta}$ will play a key role in the subsequent analysis. We have, from Corollary 4.3.2 in \cite{BD},
$$f_\Delta(\omega)={{1}\over{2\pi}}\sum_{h=-\infty}^\infty\gamma_Y(h\Delta)e^{-ih\omega},\quad-\pi\le\omega\le\pi,$$
and, substituting for $\gamma_Y$ from \eqref{cov:sam},
\begin{equation}\label{sampled}f_{\Delta}(\omega)={{-\sigma^2}\over{4\pi^2i}}\int_\rho {{b(z)b(-z)}\over{a(z)a(-z)}}{{\sinh(\Delta z)}\over{\cosh(\Delta z)-\cos(\omega)}}dz,\quad -\pi\le\omega\le\pi.\end{equation}

\section{The filtered sequence, $(\phi(B)Y^\Delta_n)_{n\in\bbz}$}\label{s3}

If $\lambda_1,\ldots,\lambda_p$ are the (not necessarily distinct) zeroes of $a(\cdot)$, then we know  from \cite{BrLi}, Lemma 2.1, that if we apply the filter
\begin{equation}\label{filter}
\phi(B):=\prod_{j=1}^p(1-e^{\lambda_j\Delta}B)
\end{equation}
to the sampled sequence, $Y^\Delta$,
we obtain a strictly stationary sequence which is $(p-1)$-correlated and is hence, by Lemma 3.2.1 of \cite{BD}, a moving average process of order $p-1$ or less.

Our goal in this section is to study the asymptotic properties, as $\Delta\downarrow 0$, of the moving average
$\theta(B)Z_n$ in the ARMA representation,
\begin{equation}\label{carma:sampled}
\phi(B)Y^\Delta_n=\theta(B)Z_n,\quad n\in\bbz,
\end{equation}
of the high-frequency sequence $Y^\Delta$.
Here $B$ denotes the backward shift operator and
$(Z_n)_{n\in\bbz}$ is an uncorrelated sequence of zero-mean random variables with constant variance which we shall denote by $\tau^2$.

We shall denote by $f_{MA}$ the spectral density of $(\theta(B)Z_n)_{n\in\bbz}$.  Then, observing that the power transfer function of the filter \eqref{filter} is
\begin{equation}{\label{gain}}
\psi(\omega) = |\prod_{j=1}^p (1-e^{\la_j\Delta +i\omega})|^2
=2^pe^{-a_1\Delta}\prod_{i=1}^p(\cosh(\lambda_i\Delta)-\cos(\omega)),\quad-\pi\le\omega\le\pi,\end{equation}
we have
\begin{equation}\label{filtered:spectrum}f_{MA}(\omega)=\psi(\omega)f_\Delta(\omega),\quad-\pi\le\omega\le\pi,\end{equation}
where $\psi(\omega)$ and $f_\Delta(\omega)$ are given by \eqref{gain} and \eqref{sampled} respectively.

In principle the expression \eqref{filtered:spectrum} determines the second order properties of $(\theta(B)Z_n)_{n\in\bbz}$ and in particular the autocovariances $\gamma_{MA}(h)$ for $h=0,\ldots,p-1$.  Ideally we would like to use these autocovariances to find the
coefficients $\theta_1,\ldots,\theta_{p-1}$ and white noise variance $\tau^2$, all of which are uniquely determined by the autocovariances, if we impose the condition that $\theta(\cdot)$ has no zeros in the interior of the unit circle. Determination of these quantities is equivalent to finding the corresponding factorization of the spectral density $f_{MA}$ (see \cite{Sayed} for a recent paper on spectral factorization).

From \eqref{sampled}, \eqref{gain} and \eqref{filtered:spectrum} we can calculate the spectral density $f_{MA}(\omega)$ as $-\sigma^2\psi(\omega)/(2\pi)$ times the sum of the residues in the left half plane of the integrand in \eqref{sampled} 
, i.e.
\beam\label{spectralma}
f_{MA}(\omega)= - {{\sigma^2}\over{2\pi}}\psi(\omega)\sum_\lambda D_z^{m(\lambda)-1}\left({{\sinh(\Delta z)b(z)b(-z)}\over{(\cosh(\lambda\Delta)-\cos(\omega))a(-z)\prod_{\mu\ne \lambda}(z-\mu)}^{m(\mu)}}\right)_{z=\lambda},
\eeam
where the sum is over the distinct zeroes $\lambda$ of $a(\cdot)$ and the product in the denominator is over the distinct zeroes $\mu$ of $a(\cdot)$, which are different from $\lambda$. The multiplicities of the zeroes $\lambda$ and $\mu$ are denoted by $m(\lambda)$ and $m(\mu)$ respectively.
When the zeroes $\lambda_1,\ldots,\lambda_p$ each have multiplicity 1, the expression for $f_{MA}(\omega)$ simplifies to
$$f_{MA}(\omega)={{(-2)^pe^{-a_1\Delta}\sigma^2}\over{2\pi}}\sum_{i=1}^p {{b(\lambda_i)b(-\lambda_i)}\over{a'(\lambda_i)a(-\lambda_i)}}\sinh(\lambda_i\Delta)\prod_{j\ne i}(\cos\omega-\cosh(\lambda_j\Delta)), \quad -\pi\le\omega\le\pi.$$
Although in principle the corresponding autocovariances $\gamma_{MA}(j)$ could be derived from $f_{MA}$,
we derive a more direct explicit expression later as Proposition~\ref{acf}.
The asymptotic behaviour of $f_{MA}$ as $\Delta\downarrow 0$ is derived in the following theorem by expanding \eqref{sampled} in powers of $\Delta$ and evaluating the corresponding coefficients.  Here and in all that follows
we shall use the notation,
 $a(\Delta)\sim b(\Delta)$, to mean that $\lim_{\Delta\downarrow 0} a(\Delta)/b(\Delta) =1$.

\begin{thm}\label{spectral:thm}
The spectral density $f_{MA}$ of $(\theta(B)Z_n)_{n\in\bbz}$ in the ARMA representation \eqref{carma:sampled} of the sampled process
$Y^\Delta$ has the asymptotic form, as $\Delta\downarrow 0$,
\begin{equation}\label{expansion:spectrum}f_{MA}(\omega)\sim{{\sigma^2}\over{2\pi}}(-1)^{p-q-1}\Delta^{2(p-q)-1}c_{p-q-1}(\omega)2^{p-1}(1-\cos{\omega})^p,\quad-\pi\le\omega\le\pi,\end{equation}
where $c_k(\omega)$ is the coefficient of $x^{2k+1}$ in the power series expansion
\begin{equation}\label{expansion:filter}\hskip -1.75in{{\sinh x}\over{\cosh x-\cos \omega}}
=\sum_{k=0}^\infty c_k(\omega) x^{2k+1}.\end{equation}
In particular, $c_0(\omega)={{1}\over{1-\cos\omega}},~c_1(\omega)=-{{2+\cos\omega}\over{6(1-\cos\omega)^2}},~c_2(\omega)= {{33+26\cos\omega+\cos(2\omega)}\over{240(1-\cos\omega)^3}}, ~\ldots.
$
\end{thm}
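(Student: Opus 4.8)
The plan is to expand the integrand of \eqref{sampled} in powers of $\Delta$, integrate term by term, and multiply by the limit of the power transfer function $\psi(\omega)$. Fix $\omega\ne 0$ (the value $\omega=0$ follows by continuity). By \eqref{expansion:filter} the $c_k(\omega)$ are the Taylor coefficients of $\sinh x/(\cosh x-\cos\omega)=\sum_{k\ge0}c_k(\omega)x^{2k+1}$, a series with positive radius of convergence $r(\omega)$. Since $\rho$ is a fixed compact curve in the open left half plane, $M:=\sup_{z\in\rho}|z|<\infty$ and $b(z)b(-z)/(a(z)a(-z))$ is bounded on $\rho$, so for $\Delta<r(\omega)/M$ the expansion $\sinh(\Delta z)/(\cosh(\Delta z)-\cos\omega)=\sum_{k\ge0}c_k(\omega)\Delta^{2k+1}z^{2k+1}$ converges uniformly on $\rho$. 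Substituting into \eqref{sampled}, using \eqref{filtered:spectrum}, and interchanging sum and integral (legitimate by the uniform convergence), I get
$$
f_{MA}(\omega)=-\frac{\sigma^2}{2\pi}\,\psi(\omega)\sum_{k\ge0}c_k(\omega)\,\Delta^{2k+1}\,I_k,\qquad I_k:=\frac{1}{2\pi i}\int_\rho\frac{b(z)b(-z)\,z^{2k+1}}{a(z)a(-z)}\,dz,
$$
consistent with \eqref{spectralma}.

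The heart of the proof is the claim that $I_k=0$ for $0\le k\le p-q-2$ and $I_{p-q-1}=(-1)^{p-q}/2$. Put $F_k(z):=b(z)b(-z)z^{2k+1}/(a(z)a(-z))$. Since $b(z)b(-z)$ and $a(z)a(-z)$ are even while $z^{2k+1}$ is odd, $F_k$ is an odd rational function; its finite poles lie among $\pm\lambda_1,\dots,\pm\lambda_p$, the origin is not one (as $a(0)\ne0$ while the numerator vanishes there), and no $\lambda_j$ equals any $-\lambda_i$ because the zeros of $a$ lie in the open left half plane. As $\rho$ encircles exactly the zeros of $a$, $I_k=\sum_\lambda Res_{z=\lambda}F_k$, the sum over the distinct zeros $\lambda$ of $a$. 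Because $z\mapsto-z$ is orientation-preserving and $F_k$ is odd, $Res_{z=-\lambda}F_k=Res_{z=\lambda}F_k$, so the sum of \emph{all} finite residues of $F_k$ equals $2I_k$; hence $2I_k=-Res_{z=\infty}F_k$. A degree count gives $F_k(z)=(-1)^{p-q}z^{2(q+k-p)+1}\bigl(1+O(1/z)\bigr)$ as $z\to\infty$, so the residue at infinity vanishes when $2(q+k-p)+1\le-3$, i.e.\ $k\le p-q-2$, and equals $-(-1)^{p-q}$ when $2(q+k-p)+1=-1$, i.e.\ $k=p-q-1$. This proves the claim.

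Feeding this back, the first surviving term is $k=p-q-1$, so
$$
f_{MA}(\omega)=-\frac{\sigma^2}{2\pi}\,\psi(\omega)\Bigl(\tfrac{(-1)^{p-q}}{2}\,c_{p-q-1}(\omega)\,\Delta^{2(p-q)-1}+R(\Delta)\Bigr),
$$
where $R(\Delta):=\sum_{k\ge p-q}c_k(\omega)\Delta^{2k+1}I_k=O(\Delta^{2(p-q)+1})$, the bound following from $|I_k|\le CM^{2k+1}$ and the absolute convergence of $\sum_k|c_k(\omega)|(M\Delta)^{2k+1}$. From \eqref{gain}, $\psi(\omega)=2^pe^{-a_1\Delta}\prod_{i=1}^p(\cosh(\lambda_i\Delta)-\cos\omega)\to 2^p(1-\cos\omega)^p$ as $\Delta\downarrow0$, and combining (using $-2^p\cdot\tfrac12\cdot(-1)^{p-q}=2^{p-1}(-1)^{p-q-1}$) yields \eqref{expansion:spectrum}. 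The explicit $c_0,c_1,c_2$ follow by dividing $\sinh x=x+\tfrac{x^3}{6}+\tfrac{x^5}{120}+\cdots$ by $\cosh x-\cos\omega=(1-\cos\omega)+\tfrac{x^2}{2}+\tfrac{x^4}{24}+\cdots$ and reading off the coefficients of $x$, $x^3$, $x^5$.

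The hard part is the evaluation of $I_k$, in particular getting the $\tfrac12$ and the sign $(-1)^{p-q}$: one is tempted to conclude from oddness that $I_k=0$ for every $k$, which is false precisely because reflection through the origin preserves orientation (the residues of $F_k$ at $\lambda_j$ and at $-\lambda_j$ add rather than cancel) and because at the critical degree $k=p-q-1$ the function $F_k$ decays exactly like $z^{-1}$, so its residue at infinity does not vanish. The remaining ingredients --- uniform convergence on $\rho$, term-by-term integration, and absorbing the tail into an $O(\Delta^{2(p-q)+1})$ error --- are routine; one should also note that the equivalence $\sim$ is understood at those $\omega$ with $c_{p-q-1}(\omega)\ne0$ (the numerators of the displayed $c_0,c_1,c_2$ have no zeros with $|\cos\omega|\le1$).
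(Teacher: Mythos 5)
Your proof is correct and follows essentially the same route as the paper's: expand the integrand of \eqref{sampled} in powers of $\Delta$ via \eqref{expansion:filter}, integrate term by term, evaluate $\frac{1}{2\pi i}\int_\rho z^{2k+1}b(z)b(-z)/(a(z)a(-z))\,dz$ as $-\tfrac12$ times the residue at infinity, and combine with $\psi(\omega)\sim 2^p(1-\cos\omega)^p$. The only difference is that the paper cites this residue identity from a reference, whereas you derive it yourself via the oddness of the integrand and the orientation-preserving nature of $z\mapsto -z$ --- a correct and self-contained addition, together with the degree count giving $I_k=0$ for $k\le p-q-2$ and $I_{p-q-1}=(-1)^{p-q}/2$.
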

\begin{proof}
The integrand in \eqref{sampled} can be expanded as a power series in $\Delta$ using \eqref{expansion:filter}.  The integral can then be evaluated term by term using the identities, (see Example 3.1.2.3. of \cite{res:ref})
$${{1}\over{2\pi i}}\int_\rho z^{2k+1}{{b(z)b(-z)}\over{a(z)a(-z)}}dz=-{{1}\over{2}}
Res_{z=\infty}\left({{z^{2k+1}b(z)b(-z)}\over{a(z)a(-z)}}\right), \quad k\in\{0,1,2,\ldots\},$$
from which we obtain, in particular,
$${{1}\over{2\pi i}}\int_\rho z^{2k+1}{{b(z)b(-z)}\over{a(z)a(-z)}}dz=\begin{cases}0 &{\rm if}~ 0\le k<p-q-1, \cr
                                                                 {{(-1)^{p-q}}\over{2}} &{\rm if}~ k=p-q-1.
                                                                 \end{cases}$$

Substituting the resulting expansion of the integral \eqref{sampled} and the asymptotic expression  $\psi(\omega)\sim 2^p(1-\cos\omega)^p$   into \eqref{filtered:spectrum}
and retaining only the dominant power of $\Delta$ as $\Delta\rightarrow 0$, we arrive at \eqref{expansion:spectrum}.
\end{proof}

\begin{cor}
The following special cases are of particular interest.
\begin{equation}\label{cor:a}\hskip -2.3in p-q=1:~~f_{MA}(\omega)\sim{{\sigma^2\Delta}\over{2\pi}}2^{q}(1-\cos\omega)^{q}.\end{equation}
\begin{equation}\label{cor:b}\hskip -1.3in p-q=2:~~f_{MA}(\omega)\sim{{\sigma^2\Delta^3}\over{2\pi}}\left({{2}\over{3}}+{{\cos \omega}\over{3}}\right)2^{q}(1-\cos\omega)^{q}.\end{equation}
\begin{equation}\label{cor:c}p-q=3:~~f_{MA}(\omega)\sim{{\sigma^2\Delta^5}\over{2\pi}}\left ({{11}\over{20}}+{{13\cos \omega}\over{30}}+{{\cos(2\omega)}\over{60}}\right) 2^{q}(1-\cos\omega)^{q}.\end{equation}
\end{cor}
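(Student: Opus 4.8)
The plan is to obtain \eqref{cor:a}--\eqref{cor:c} by straightforward specialization of the general asymptotic formula \eqref{expansion:spectrum} of Theorem~\ref{spectral:thm}. First I would record what happens to each ingredient of \eqref{expansion:spectrum} when $p-q$ is set equal to $1$, $2$, $3$: the exponent $2(p-q)-1$ of $\Delta$ becomes $1$, $3$, $5$; the sign $(-1)^{p-q-1}$ becomes $+1$, $-1$, $+1$; and the coefficient $c_{p-q-1}(\omega)$ becomes $c_0(\omega)$, $c_1(\omega)$, $c_2(\omega)$, whose explicit forms are already supplied in the theorem.

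The key simplification is the bookkeeping of the powers of $2$ and of $1-\cos\omega$. Each $c_k(\omega)$ listed in the theorem has the shape $c_k(\omega)=P_k(\cos\omega)/(d_k(1-\cos\omega)^{k+1})$ with $P_0=1,\ d_0=1$; $P_1(\cos\omega)=-(2+\cos\omega),\ d_1=6$; $P_2(\cos\omega)=33+26\cos\omega+\cos(2\omega),\ d_2=240$. Hence $c_{p-q-1}(\omega)$ carries a denominator $(1-\cos\omega)^{p-q}$, which cancels against the factor $(1-\cos\omega)^p$ in \eqref{expansion:spectrum} to leave exactly $(1-\cos\omega)^{q}$, the exponent appearing in \eqref{cor:a}--\eqref{cor:c}. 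Likewise $2^{p-1}=2^{q}\,2^{p-q-1}$, and the surplus factor $2^{p-q-1}$ (equal to $1$, $2$, $4$) combines with the sign $(-1)^{p-q-1}$, with $P_{p-q-1}(\cos\omega)$ and with $d_{p-q-1}$ to produce the trigonometric polynomial displayed in each case.

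Then I would carry out the three elementary reductions. For $p-q=1$ the combined numeric factor is $1\cdot 1\cdot 1=1$, giving \eqref{cor:a}. For $p-q=2$ it is $(-1)\cdot\bigl(-\tfrac{2+\cos\omega}{6}\bigr)\cdot 2=\tfrac{2+\cos\omega}{3}=\tfrac23+\tfrac13\cos\omega$, giving \eqref{cor:b}. For $p-q=3$ it is $(+1)\cdot\tfrac{33+26\cos\omega+\cos(2\omega)}{240}\cdot 4=\tfrac{33+26\cos\omega+\cos(2\omega)}{60}=\tfrac{11}{20}+\tfrac{13}{30}\cos\omega+\tfrac1{60}\cos(2\omega)$, giving \eqref{cor:c}. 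Substituting these back into \eqref{expansion:spectrum} with the appropriate power of $\Delta$ yields the three stated asymptotics.

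There is essentially no obstacle: the corollary is an immediate consequence of Theorem~\ref{spectral:thm}, and the only point needing mild care is the cancellation of the powers of $2$ and of $1-\cos\omega$ so that each result is presented in the normalized form $(\text{trigonometric polynomial})\times 2^{q}(1-\cos\omega)^{q}$. If desired one could also re-verify the claimed values of $c_0,c_1,c_2$ by expanding $\sinh x/(\cosh x-\cos\omega)$ through order $x^{5}$, but since these are already established in the proof of the theorem, nothing more is required.
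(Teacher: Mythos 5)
Your proposal is correct and follows exactly the paper's own route: the authors likewise obtain \eqref{cor:a}--\eqref{cor:c} by substituting the stated values of $c_0(\omega)$, $c_1(\omega)$, $c_2(\omega)$ into \eqref{expansion:spectrum}, and your explicit bookkeeping of the signs, the factors $2^{p-q-1}$, and the cancellation of $(1-\cos\omega)^{p-q}$ is just the detailed version of that one-line argument. All three arithmetic reductions check out.
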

\begin{proof}
These expressions are obtained from \eqref{expansion:spectrum} using the values of $c_0(\omega), c_1(\omega)$ and $c_2(\omega)$ given in the statement of the theorem.
\end{proof}

\brem
(i) \,
The right-hand side of \eqref{cor:a} is the spectral density of a $q$-times differenced white noise with variance $\sigma^2\Delta$.
It follows that, if $q=p-1$, then the moving average polynomial $\theta(B)$
in \eqref{carma:sampled} is asymptotically $(1-B)^q$ and the white noise variance $\tau^2$ is asymptotically $\sigma^2\Delta$ as $\Delta\rightarrow 0$. This result is stated with the corresponding results for $p-q=2$ and $p-q=3$  in the following corollary.\\[2mm]
(ii) \, By Proposition~3.32 of \cite{MaSt:2007} a CARMA($p,q$)-process has sample paths which are $(p-q-1)$-times differentiable.
Consequently to represent processes with non-differentiable sample-paths it is necessary to restrict attention to the case $p-q=1$.  It is widely believed that sample-paths with more than two derivatives are too smooth to represent the processes observed empirically in finance and turbulence (see e.g. \cite{JaTo:2010,JKM:2010}) so we are not concerned with the cases when $p-q>3$.
\erem
\begin{cor}\label{spectralcor}
The moving average process $X_n:=\theta(B)Z_n$ in \eqref{carma:sampled} has for $\Delta\downarrow 0$ the following asymptotic form.\\[2mm]
(a) If  $p-q=1$, then
$$X_n=(1-B)^{q}Z_n,\quad n\in\bbz,$$
where $\tau^2:=Var(Z_n)=\sigma^2\Delta$.\\[2mm]
(b) If $p-q=2$, then
$$X_n=(1+\theta B)(1-B)^{q}Z_n,\quad n\in\bbz,$$
where $\theta=2-{\sqrt{3}}$ and $\tau^2:=Var(Z_n)=\sigma^2\Delta^3(2+\sqrt{3})/6$.\\[2mm]
(c) If $p-q=3$, then
$$X_n=(1+\theta_1 B+\theta_2 B^2)(1-B)^{q}Z_n,\quad n\in\bbz,$$
where $\theta_2=2 \left(8+\sqrt{30}\right)-\sqrt{375+64 \sqrt{30}}$, $\theta_1=26\theta_2/(1+\theta_2)=13-\sqrt{135+4 \sqrt{30}}$ and
$\tau^2=\left(2 \left(8+\sqrt{30}\right)+\sqrt{375+64 \sqrt{30}}\right) \Delta ^5 \sigma ^2/120$.
\end{cor}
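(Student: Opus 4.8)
The plan is to match the asymptotic expressions for $f_{MA}$ obtained in the preceding corollary against the general form of the spectral density of an $\MA(p-1)$ process,
\[
f_{MA}(\omega)=\frac{\tau^2}{2\pi}|\theta(e^{-i\omega})|^2,\qquad \theta(z)=1+\theta_1z+\cdots+\theta_{p-1}z^{p-1},
\]
with $\theta(\cdot)$ normalised to have no zeros in the interior of the unit disc, so that $\theta(\cdot)$ and $\tau^2$ are uniquely determined by the autocovariances. Since $2^q(1-\cos\omega)^q=|1-e^{-i\omega}|^{2q}$, the factor $2^q(1-\cos\omega)^q$ common to \eqref{cor:a}--\eqref{cor:c} corresponds to a factor $(1-z)^q$ in $\theta(z)$; writing $\theta(z)=\theta^*(z)(1-z)^q$ with $\deg\theta^*=p-q-1$ and $\theta^*(0)=1$, the task reduces to the spectral factorisation
\[
\frac{\tau^2}{\sigma^2\Delta^{2(p-q)-1}}\,|\theta^*(e^{-i\omega})|^2=Q_{p-q-1}(\cos\omega),
\]
where $Q_0\equiv1$, $Q_1(\cos\omega)=\tfrac23+\tfrac13\cos\omega$ and $Q_2(\cos\omega)=\tfrac{11}{20}+\tfrac{13}{30}\cos\omega+\tfrac1{60}\cos(2\omega)$ are the trigonometric polynomials coming from $c_0,c_1,c_2$ in the corollary. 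Because the stated asymptotics for $f_{MA}$ are uniform in $\omega$, they transfer to asymptotics for the finitely many autocovariances $\gamma_{MA}(h)$, $h=0,\dots,p-1$ (recall that $(\phi(B)Y^\Delta_n)$ is exactly $(p-1)$-correlated), and since the map from the autocovariance vector of an $\MA(p-1)$ process to its canonical coefficients $(\theta_1,\dots,\theta_{p-1})$ and innovation variance is continuous, the asymptotic factorisation above yields the claimed limits for the $\theta_j$ and for $\tau^2/\Delta^{2(p-q)-1}$. It therefore suffices to carry out, in each case, the Fej\'{e}r--Riesz factorisation of a nonnegative trigonometric polynomial of degree $0$, $1$ or $2$.

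For $p-q=1$ this is immediate: $Q_0\equiv1$ forces $\theta^*\equiv1$, so $\theta(z)=(1-z)^q$ and $\tau^2=\sigma^2\Delta$, which is (a). For $p-q=2$, put $\theta^*(z)=1+\theta z$, so $|\theta^*(e^{-i\omega})|^2=(1+\theta^2)+2\theta\cos\omega$. Equating the ratio of the $\cos\omega$-coefficient to the constant term with $\tfrac{1/3}{2/3}$ gives $1+\theta^2=4\theta$, i.e.\ $\theta^2-4\theta+1=0$; the root with $|\theta|<1$ (so that the zero $-1/\theta$ of $\theta^*$ lies outside the unit disc) is $\theta=2-\sqrt3$. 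The constant-term equation, equivalently $2\theta\,\tau^2/(\sigma^2\Delta^3)=\tfrac13$, then gives $\tau^2=\sigma^2\Delta^3/(6\theta)=\sigma^2\Delta^3(2+\sqrt3)/6$, which is (b).

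For $p-q=3$, write $\theta^*(z)=1+\theta_1z+\theta_2z^2$, so that
\[
|\theta^*(e^{-i\omega})|^2=(1+\theta_1^2+\theta_2^2)+2\theta_1(1+\theta_2)\cos\omega+2\theta_2\cos(2\omega).
\]
Matching the three coefficients against those of $Q_2$ and taking ratios to eliminate $\tau^2$ gives $\theta_1(1+\theta_2)=26\theta_2$, hence $\theta_1=26\theta_2/(1+\theta_2)$, together with $1+\theta_1^2+\theta_2^2=66\theta_2$. Substituting the first relation into the second and clearing denominators produces the reciprocal quartic $\theta_2^4-64\theta_2^3+546\theta_2^2-64\theta_2+1=0$; dividing by $\theta_2^2$ and setting $v=\theta_2+\theta_2^{-1}$ reduces it to $v^2-64v+544=0$, with roots $v=32\pm4\sqrt{30}$. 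The four roots of the quartic form two reciprocal pairs, and exactly one choice, arising from $v=32+4\sqrt{30}$, yields a $\theta^*$ with both zeros outside the unit disc, namely $\theta_2=2(8+\sqrt{30})-\sqrt{375+64\sqrt{30}}$. Substituting back into $\theta_1=26\theta_2/(1+\theta_2)$ and simplifying the nested radical (using the quadratic relation $(1+\theta_2)^2=(34+4\sqrt{30})\theta_2$, or rationalising by $17+2\sqrt{30}+\sqrt{375+64\sqrt{30}}$) gives $\theta_1=13-\sqrt{135+4\sqrt{30}}$. Finally the constant-term equation reads $2\theta_2\,\tau^2/(\sigma^2\Delta^5)=\tfrac1{60}$, i.e.\ $\tau^2=\sigma^2\Delta^5/(120\theta_2)$; since $\theta_2$ and $\theta_2^{-1}$ are the two roots of $\theta_2^2-v\theta_2+1=0$ with $v=32+4\sqrt{30}$, we have $\theta_2^{-1}=2(8+\sqrt{30})+\sqrt{375+64\sqrt{30}}$, so $\tau^2=\sigma^2\Delta^5\bigl(2(8+\sqrt{30})+\sqrt{375+64\sqrt{30}}\bigr)/120$, which is (c).

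I expect the only genuinely delicate parts to lie in case (c): verifying the closed-form radicals — in particular that $26\theta_2/(1+\theta_2)$ collapses to $13-\sqrt{135+4\sqrt{30}}$ and that $\theta_2^{-1}$ has the displayed form — and confirming that the selected root is the one for which $\theta^*$ (hence $\theta$) has all its zeros on or outside the unit circle, so that the factorisation is the canonical one and the continuity argument applies. A secondary point requiring a word of care is the passage from ``$\sim$'' for $f_{MA}$ to ``$\sim$'' for the individual autocovariances and thence, through the continuous spectral-factorisation map, to the coefficients and to $\tau^2$; this is legitimate precisely because only the finitely many autocovariances $\gamma_{MA}(0),\dots,\gamma_{MA}(p-1)$ are involved.
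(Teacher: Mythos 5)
Your proposal is correct and follows essentially the same route as the paper: both read off the (finitely many) moment conditions from the asymptotic spectral densities \eqref{cor:a}--\eqref{cor:c}, reduce to a quadratic (resp.\ quartic) for the moving-average coefficient(s), and select the root giving an invertible $\theta(\cdot)$ — the paper phrases this via the autocovariances $\gamma_{MA}(0),\dots,\gamma_{MA}(p-1)$ while you match Fourier coefficients of $\tfrac{\tau^2}{2\pi}|\theta(e^{-i\omega})|^2$, which yields identical equations. Your explicit solution of the quartic in case (c) via the reciprocal substitution $v=\theta_2+\theta_2^{-1}$, and the remark on transferring the ``$\sim$'' from $f_{MA}$ to the finitely many autocovariances, supply details the paper leaves implicit.
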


\begin{proof} (a) follows immediately from Theorem 4.4.2 of \cite{BD}.

To establish (b) we observe from \eqref{cor:b} that the required moving average is the $q$ times differenced MA(1) process with autocovariances at lags zero and one, $\gamma(0)=2\sigma^2\Delta^3/3$ and
$\gamma(1)=\sigma^2\Delta^3/6$.  Expressing these covariances in terms of $\theta$ and $\tau^2$ gives the equations,
$$(1+\theta^2)\tau^2=2\sigma^2\Delta^3/3,$$
$$\theta\tau^2=\sigma^2\Delta^3/6,$$
from which we obtain a quadratic equation for $\theta$. Choosing the unique solution which makes the MA(1) process
invertible gives the required result.

The proof of (c) is analogous.  The corresponding argument yields a quartic equation for $\theta_2$.  The particular solution given in the statement of (b) is the one which satisfies the condition that $\theta(z)$ is nonzero for all complex $z$ such that $|z|<1$.
\end{proof}

{Although the absence of the moving-average coefficients, $b_j$, from Corollary 3.4 suggests that they cannot be estimated from very closely-spaced observations, the coefficients do appear if the expansions are taken to higher order in $\Delta$.  The apparent weak dependence of the sampled sequence on the moving-average coefficients as $\Delta\downarrow 0$ is compensated by the increasing number of available observations.
}

In principle the autocovariance function $\gamma_{MA}$ can be calculated, as indicated earlier, from the corresponding spectral density $f_{MA}$ given by \eqref{filtered:spectrum} and \eqref{f:tran}.
Below we derive a more direct representation of $\gamma_{MA}$ and use it to prove Theorem~\ref{acv:approx:thm}, which is the time-domain analogue of Theorem~\ref{spectral:thm}.

Define $B_\Delta g(t)=g(t-\Delta)$ for $t\in\bbr$.
We show that $\phi(B_\Delta)g(\cdot)\equiv 0$ for $t>p\Delta$.

\begin{lem}\label{pq}
Let $Y$ be the $\CARMA(p,q)$ process \eqref{1.4} and $\Delta>0$.
Define $\phi(B)$ as in \eqref{filter}. Then
\beam\label{prod}
\phi(B_\Delta)g(t):=\prod_{j=1}^p(1-e^{\lambda_j\Delta}B_\Delta)g(t)=0, \quad t>p\Delta.
\eeam
\end{lem}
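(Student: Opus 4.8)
The plan is to fix $t>p\Delta$, reduce $\phi(B_\Delta)g(t)$ to a matrix expression via the state-space representation \eqref{jordan:0}, and then annihilate that expression with the Cayley--Hamilton theorem. Since $t>p\Delta$, every argument $t-j\Delta$ with $0\le j\le p$ satisfies $t-j\Delta\ge t-p\Delta>0$, so by \eqref{jordan:0} we may write $g(t-j\Delta)=\mathbf{b}^\top e^{A(t-j\Delta)}\mathbf{e}_p$ for each such $j$. Writing $\phi(w)=\prod_{j=1}^p(1-e^{\lambda_j\Delta}w)=\sum_{j=0}^p\phi_j w^j$, it follows that
\[
\phi(B_\Delta)g(t)=\sum_{j=0}^p\phi_j\,g(t-j\Delta)=\mathbf{b}^\top e^{At}\Big(\sum_{j=0}^p\phi_j e^{-jA\Delta}\Big)\mathbf{e}_p=\mathbf{b}^\top e^{At}\,\phi\!\left(e^{-A\Delta}\right)\mathbf{e}_p ,
\]
so it suffices to prove that the matrix $\phi(e^{-A\Delta})$ is the zero matrix.

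For this I would factor each of the $p$ commuting factors of $\phi(e^{-A\Delta})$. For a zero $\lambda$ of $a(\cdot)$,
\[
I-e^{\lambda\Delta}e^{-A\Delta}=e^{-A\Delta}\big(e^{A\Delta}-e^{\lambda\Delta}I\big)=e^{\lambda\Delta}e^{-A\Delta}\big(e^{(A-\lambda I)\Delta}-I\big),
\]
and $e^{(A-\lambda I)\Delta}-I=(A-\lambda I)R_\lambda$ with $R_\lambda=\Delta\sum_{n\ge0}\frac{\big((A-\lambda I)\Delta\big)^n}{(n+1)!}$ a convergent power series in $A$; in particular $R_\lambda$, $e^{-A\Delta}$ and each $A-\lambda_i I$ commute with one another. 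Taking the product over $j=1,\dots,p$ (counting multiplicities) therefore yields
\[
\phi\!\left(e^{-A\Delta}\right)=\prod_{j=1}^p\big(I-e^{\lambda_j\Delta}e^{-A\Delta}\big)=\Big(\prod_{j=1}^p(A-\lambda_j I)\Big)\cdot C=a(A)\,C,
\]
where $C$ is a matrix commuting with $A$ and $a(A)$ is the autoregressive polynomial evaluated at $A$. Since $A$ is the companion matrix of $a(\cdot)$, its characteristic polynomial is $a(\cdot)$, so $a(A)=0$ by the Cayley--Hamilton theorem; hence $\phi(e^{-A\Delta})=0$ and the lemma follows.

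There is no real obstacle here; the only points needing care are (i) the threshold $t>p\Delta$, which is exactly what guarantees that all $p+1$ shifted arguments lie in $(0,\infty)$ where \eqref{jordan:0} holds, and (ii) verifying that the factors of $\phi(e^{-A\Delta})$ genuinely commute so that $a(A)$ can be pulled out. An alternative, matrix-free route: by \eqref{jordan:2}, $g$ restricted to $(0,\infty)$ equals $\sum_\lambda p_\lambda(t)e^{\lambda t}$ with $\deg p_\lambda\le m(\lambda)-1$, and one checks directly that $(1-e^{\lambda\Delta}B_\Delta)\big(P(t)e^{\lambda t}\big)=\big(P(t)-P(t-\Delta)\big)e^{\lambda t}$ lowers the degree of the polynomial factor by one, so $(1-e^{\lambda\Delta}B_\Delta)^{m(\lambda)}$ annihilates each summand $p_\lambda(t)e^{\lambda t}$ (all shifts being positive when $t>p\Delta$); grouping the $p$ factors of $\phi(B_\Delta)$ as $\prod_{\lambda}(1-e^{\lambda\Delta}B_\Delta)^{m(\lambda)}$ and using commutativity then gives $\phi(B_\Delta)g(t)=0$. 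This second form also makes transparent that the conclusion is insensitive to possible ``aliasing'' coincidences among the $e^{\lambda_j\Delta}$.
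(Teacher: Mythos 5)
Your proof is correct, and it takes a genuinely different route from the paper's. The paper expands $\phi(B_\Delta)g(t)=\sum_{j}A^p_jg(t-j\Delta)$, recognises the Fourier transform of the filtered kernel as $\prod_\lambda(1-e^{\Delta(\lambda+i\omega)})^{m(\lambda)}\,b(-i\omega)/a(-i\omega)$, rewrites this as the contour integral $-\frac{1}{2\pi i}\int_\rho\prod_\lambda(1-e^{\Delta(\lambda-z)})^{m(\lambda)}\frac{b(z)}{a(z)}e^{tz}dz$, and concludes by Cauchy's theorem because the factor $\prod_\lambda(1-e^{\Delta(\lambda-z)})^{m(\lambda)}$ cancels the poles of $1/a(z)$, leaving only removable singularities inside $\rho$. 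Your main argument instead works entirely in the state space: you reduce the claim to $\phi(e^{-A\Delta})=0$ and obtain this by factoring out $a(A)=\prod_j(A-\lambda_jI)$ from the commuting product and invoking Cayley--Hamilton for the companion matrix. These are really two guises of the same cancellation --- the scalar function $\phi(e^{-z\Delta})$ vanishes to order $m(\lambda)$ at each zero $\lambda$ of $a$, which is what makes the paper's singularities removable and what makes your matrix factor $a(A)$ appear --- but your version is more elementary (no contour integration), handles repeated roots with no extra effort, and makes completely explicit why the threshold $t>p\Delta$ is exactly what is needed (all $p+1$ shifted arguments must lie in $(0,\infty)$ for \eqref{jordan:0} to apply). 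This last point is actually cleaner than the paper's own wording, which asserts that ``the filtered kernel is zero for every $t\in\bbr$'' even though the contour representation of each term $g(t-j\Delta)$ likewise requires $t-j\Delta>0$; the conclusion genuinely fails for $0<t\le p\Delta$, as it must, since otherwise $\gamma_{MA}$ would vanish identically. Your alternative exponential-polynomial argument via \eqref{jordan:2} is also sound and is the standard ``discrete annihilator'' proof; all three approaches are legitimate.
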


\begin{proof}
Rewriting the product in \eqref{prod} as a sum we find
$\phi(B_\Delta)g(t) = \sum_{j=0}^p A_j^p g(t-j\Delta)$,
which has Fourier transform (invoking the shift property
and the right hand side of \eqref{f:tran})
$$\prod_{\lambda}(1-e^{\Delta(\lambda+i\omega)})^{m(\lambda)}\frac{b(-i\omega)}{a(-i\omega)},\quad \omega\in\bbr,$$
where the product is taken over the distinct zeroes of $a(\cdot)$ having multiplicity $m(\lambda)$.
Using the fact that the product of Fourier transforms corresponds to the convolution of functions, we obtain from \eqref{jordan}
$$\phi(B_\Delta)g(t)=-\frac{1}{2\pi i}\int_\rho \prod_{\lambda}(1-e^{\Delta(\lambda-z)})^{m(\lambda)} \frac{b(z)}{a(z)}e^{tz}dz
=-\sum_{\lambda}Res_{z=\lambda}\left(e^{zt}b(z)\prod_{\lambda}\frac{(1-e^{\Delta(\lambda-z)})^{m(\lambda)}}{(z-\lambda)^{m(\lambda)}}\right).$$
Now note that, for every of the distinct zeroes $\lambda_j$,
$$\lim_{z\rightarrow\lambda_i}\frac{(1-e^{\Delta(\lambda_j-z)})^{m(\lambda_j)}}{(z-\lambda_j)^{m(\lambda_j)}}=\Delta^{m(\lambda_j)}.$$
The singularities at $z=\lambda_j$ are removable and, therefore, using Cauchy's residue theorem, {Theorem 1} of Section~3.1.1, p.~25, and Theorem~2 of Section~2.1.2, p.~7, of \cite{res:ref}, the filtered kernel is zero for every $t\in\bbr$.
\end{proof}

\begin{prop}\label{acf}
Let $Y$ be the $\CARMA(p,q)$ process \eqref{1.4} and $\Delta>0$.
The autocovariance at lag n of $(\phi(B)Y^\Delta_j)_{j\in\bbz}$ is, for $n=0,1,\ldots,p-1$,
\begin{equation}
\label{simp:acv}\gamma_{{MA}}(n)=\sigma^2\sum_{i=1}^{p-n}~\sum_{k=0}^{n+i-1}\sum_{h=0}^{i-1}A^p_kA^p_h
\int_{(i-1)\Delta}^{i\Delta}g(s-h\Delta)g(s-(k-n)\Delta)ds,
\end{equation}
with
\begin{equation}\label{coeffs}
A^p_k=(-1)^k\sum_{\{i_1,\ldots,i_k\}\in C_k^p}e^{\Delta(\lambda_{i_1}+\cdots+\lambda_{i_k})},\quad k=1,\ldots,p.
\end{equation}
The sum in \eqref{coeffs} is taken over the $p\choose k$ subsets of size $k$ of $\{1,2,\ldots,p\}$.
\end{prop}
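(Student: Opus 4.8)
The approach is to realise the filtered sequence $(\phi(B)Y^\Delta_j)_{j\in\bbz}$ as a stochastic integral against $L$ with a compactly supported kernel, and then extract the autocovariances from the second-order isometry for square-integrable L\'evy integrals. First I would expand the filter as $\phi(B)=\sum_{k=0}^p A^p_k B^k$, where $A^p_0=1$ and $A^p_k$ is given by \eqref{coeffs} (the signed $k$-th elementary symmetric function of $e^{\lambda_1\Delta},\dots,e^{\lambda_p\Delta}$). Substituting the moving-average representation $Y^\Delta_j=Y_{j\Delta}=\int_\bbr g(j\Delta-u)\,dL_u$ from \eqref{1.4} and using linearity of the stochastic integral over the finite sum gives
$$\phi(B)Y^\Delta_j=\int_\bbr\sum_{k=0}^pA^p_k\,g\big((j-k)\Delta-u\big)\,dL_u=\int_\bbr h_\Delta(j\Delta-u)\,dL_u,$$
with $h_\Delta(t):=\phi(B_\Delta)g(t)=\sum_{k=0}^pA^p_k\,g(t-k\Delta)$. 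By Lemma~\ref{pq} we have $h_\Delta(t)=0$ for $t>p\Delta$, while causality of $g$ (that is, $g$ vanishes on $(-\infty,0]$) forces every summand, and hence $h_\Delta$ itself, to vanish for $t\le0$; so $h_\Delta$ is supported on $(0,p\Delta]$ and lies in $L^2(\bbr)$.

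Next I would apply the covariance identity $\cov\big(\int_\bbr f\,dL,\int_\bbr \tilde f\,dL\big)=\sigma^2\int_\bbr f\tilde f$, valid for $f,\tilde f\in L^2(\bbr)$ since $EL_1=0$ and $EL_1^2=\sigma^2$. This yields, for $n\ge0$,
$$\gamma_{MA}(n)=\cov\big(\phi(B)Y^\Delta_{j+n},\phi(B)Y^\Delta_j\big)=\sigma^2\int_\bbr h_\Delta\big((j+n)\Delta-u\big)h_\Delta(j\Delta-u)\,du,$$
and the substitution $s=j\Delta-u$ removes the dependence on $j$, giving $\gamma_{MA}(n)=\sigma^2\int_\bbr h_\Delta(s+n\Delta)h_\Delta(s)\,ds$. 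Since $h_\Delta$ is supported on $(0,p\Delta]$, the product $h_\Delta(s+n\Delta)h_\Delta(s)$ vanishes outside $s\in(0,(p-n)\Delta]$; in particular $\gamma_{MA}(n)=0$ for $n\ge p$, consistent with the $(p-1)$-correlatedness of the filtered sequence. For $0\le n\le p-1$ I would partition $(0,(p-n)\Delta]$ into the cells $((i-1)\Delta,i\Delta]$, $i=1,\dots,p-n$.

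On the cell $s\in((i-1)\Delta,i\Delta]$ causality of $g$ truncates the two sums defining $h_\Delta$: $g(s-h\Delta)=0$ unless $h\le i-1$, so $h_\Delta(s)=\sum_{h=0}^{i-1}A^p_h\,g(s-h\Delta)$; and $g\big(s-(k-n)\Delta\big)=0$ unless $k\le n+i-1$, so $h_\Delta(s+n\Delta)=\sum_{k=0}^{n+i-1}A^p_k\,g\big(s-(k-n)\Delta\big)$. Multiplying these two finite sums, interchanging them with the integral over the cell, and summing over $i$ produces exactly \eqref{simp:acv}. The only genuinely delicate step is this per-cell bookkeeping of the truncation indices; the remaining ingredients --- linearity and the $L^2$ isometry of the L\'evy integral, the change of variable, and the decomposition of the support --- are routine.
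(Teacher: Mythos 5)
Your proof is correct and follows essentially the same route as the paper: expand $\phi(B_\Delta)=\sum_k A^p_kB_\Delta^k$, use Lemma~\ref{pq} together with causality of $g$ to confine the filtered kernel to $(0,p\Delta]$, and extract the autocovariance via the second-order orthogonality of the L\'evy integral before partitioning into $\Delta$-cells. The only (cosmetic) difference is that you pass to the deterministic convolution integral via the $L^2$ isometry and then truncate the sums cell by cell, whereas the paper decomposes the stochastic integral over the cells first and then invokes orthogonal increments; the index bookkeeping is identical.
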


\begin{proof}
We note that $\gamma_{{MA}}(n)$ is the same as  $\bbe[(\phi(B_\Delta)Y)_t(\phi(B_\Delta)Y)_{t+\Delta n}]$                          and use the same expansion as in the proof of Lemma~\ref{pq}, i.e.
\begin{equation}\label{filt:expansion}
\phi(B_\Delta)=\prod_{j=1}^p(1-e^{\lambda_j\Delta}B_\Delta)
= \sum_{k=0}^pA^p_kB_\Delta^k,
\end{equation}
which we apply to $Y$.
Observe that for $t\in\bbr$, setting $t_k:=t-k\Delta$ for $k=0,\ldots,p$, and $t_{p+1}:=-\infty$,
\begin{equation}\label{es}
B_\Delta^k Y_t=B_\Delta^k\int_{-\infty}^tg(t-u)dL_u=\int_{-\infty}^{t_k}g(t_k-u)dL_u=\sum_{i=k}^p\int_{t_{i+1}}^{t_i}g(t_k-u)dL_u.
\end{equation}
Applying the operator \eqref{filt:expansion} to $Y_t$, using \eqref{es} and interchanging the order of summation gives
\begin{equation}\label{I:j}
(\phi(B_\Delta)Y)_t=\sum_{m=0}^p\int_{t_{m+1}}^{t_m}\sum_{k=0}^mA_k^pg(t_k-u)dL_u.
\end{equation}
From Lemma~\ref{pq} we know that the contribution from the term corresponding to $m=p$ is zero.
By stationarity, the autocovariance function is independent of $t$, hence we can choose $t=\Delta n$.
Then we obtain
\beao
(\phi(B_\Delta)Y)_{n\Delta}
&=& \sum_{j=0}^{p-1}\int_{\Delta(n-j-1)}^{\Delta (n-j)}\sum_{k=0}^j A_k^pg((n-k)\Delta -u)dL_u\\
 &=& \sum_{j=1}^{p}\int_{\Delta(n-j)}^{\Delta (n-j+1)}\sum_{k=0}^{j-1} A_k^pg((n-k)\Delta -u)dL_u.
\eeao
For $t=0$, we obtain analogously
\beao
(\phi(B_\Delta)Y)_{0}
=\sum_{i=1}^{p}\int_{-\Delta i}^{-\Delta (i-1)}\sum_{h=0}^{i-1} A_h^p g(-\Delta h -u)dL_u.
\eeao
For the autocovariance function we obtain for $n=0,\ldots,p-1$ by using the fact that $L$ has orthogonal increments,
\beao\label{A:5}
\gamma_{MA}(n) &=& \bbe[(\phi(B_\Delta)Y)_0(\phi(B_\Delta)Y)_{n\Delta}]\\
&=& \si^2  \sum_{i=1}^{p-n}  \sum_{k=0}^{i+n-1} \sum_{h=0}^{i-1} A_k^p A_h^p
\int_{-i\Delta}^{-(i-1)\Delta}   g((n-k)\Delta -u)   g(-\Delta h -u) du.\\
\eeao
Finally, \eqref{simp:acv} is obtained by changing the variable of integration from $u$ to $s=-u$.
\end{proof}

\begin{thm}\label{acv:approx:thm}
The autocovariance function $\gamma_{MA}(n)$ for $n=1,\ldots,p-1$ has for $\Delta\downarrow 0$ the asymptotic form
\begin{equation}\label{acv:approx}
\gamma_{{MA}}(n) \sim \frac{\sigma^2 \Delta^{2(p-q)-1} }{((p-q-1)!)^2}\sum_{i=1}^{p-n}~\sum_{k=0}^{n+i-1}\sum_{h=0}^{i-1} (-1)^{h+k}{p \choose k}{p \choose h}C(h,k,i,n;p-q-1),\end{equation}
where for $N\in\bbn_0$
\beao
C(h,k,i,n;N) &:=& \int_{0}^{1}(s + i - 1-h)^{N}(s + i - 1-k+n)^{N}ds.
\eeao
\end{thm}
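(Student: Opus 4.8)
The plan is to extract the leading power of $\Delta$ directly from the exact representation \eqref{simp:acv} of Proposition~\ref{acf}, by rescaling each integral and replacing $g$ near the origin by its leading Taylor term.

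First I would substitute, in each summand of \eqref{simp:acv}, $s=\Delta(u+i-1)$ with $ds=\Delta\,du$, which turns the integral over $[(i-1)\Delta,i\Delta]$ into
$$\int_{(i-1)\Delta}^{i\Delta}g(s-h\Delta)\,g(s-(k-n)\Delta)\,ds
=\Delta\int_0^1 g\big(\Delta(u+i-1-h)\big)\,g\big(\Delta(u+i-1-k+n)\big)\,du.$$
Because $0\le h\le i-1$ and $0\le k\le n+i-1$ throughout the triple sum, both arguments of $g$ are of the form $\Delta$ times a number lying in $(0,p)$ for $u\in(0,1)$; in particular $g$ is only ever evaluated at strictly positive arguments confined to a fixed bounded interval, where $g$ is smooth, so the jump of $g$ at the origin plays no role.

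Next I would expand $g$ by means of the matrix-exponential series in \eqref{jordan:0}: for $y>0$,
$$g(\Delta y)=\mathbf{b}^{\top} e^{A\Delta y}\mathbf{e}_p
=\sum_{j\ge0}(\mathbf{b}^{\top} A^j\mathbf{e}_p)\,\frac{(\Delta y)^j}{j!}
=\frac{(\Delta y)^{p-q-1}}{(p-q-1)!}+O(\Delta^{p-q}),$$
uniformly for $y$ in a bounded set, where the last step uses \eqref{value:D:carma} to discard the terms with $j<p-q-1$ and to identify the coefficient $1$ attached to $j=p-q-1$. Multiplying the two factors then gives, uniformly, $g(\Delta y_1)\,g(\Delta y_2)=\Delta^{2(p-q-1)}\,y_1^{p-q-1}y_2^{p-q-1}/((p-q-1)!)^2+O(\Delta^{2(p-q)-1})$, so, restoring the outer factor $\Delta$ and integrating over $u\in(0,1)$, each integral in \eqref{simp:acv} equals
$$\frac{\Delta^{2(p-q)-1}}{((p-q-1)!)^2}\,C(h,k,i,n;p-q-1)+O(\Delta^{2(p-q)}),$$
since $2(p-q-1)+1=2(p-q)-1$ and the integral of the leading polynomial term is precisely $C(h,k,i,n;p-q-1)$ as defined in the statement. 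Finally, by \eqref{coeffs} each coefficient satisfies $A^p_k=(-1)^k\binom{p}{k}+O(\Delta)$ as $\Delta\downarrow0$ (each of the $\binom{p}{k}$ exponentials tending to $1$); inserting this and the integral expansion into \eqref{simp:acv} and keeping the dominant power of $\Delta$ produces \eqref{acv:approx}.

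The one genuinely delicate point---the main obstacle---is not any individual estimate but the need to be sure that the claimed order $\Delta^{2(p-q)-1}$ is actually attained, i.e.\ that the triple sum $\sum_{i}\sum_{k}\sum_{h}(-1)^{h+k}\binom{p}{k}\binom{p}{h}C(h,k,i,n;p-q-1)$ does not vanish; were it zero the relation $\sim$ in \eqref{acv:approx} would be meaningless as written and $\gamma_{MA}(n)$ would be of strictly smaller order. For the cases $p-q\in\{1,2,3\}$ of interest here this non-vanishing can be verified against the explicit autocovariances underlying Theorem~\ref{spectral:thm} and Corollary~\ref{spectralcor}. One should also check that the finitely many $O(\Delta)$ and $O(\Delta^{2(p-q)})$ remainders combine to $o(\Delta^{2(p-q)-1})$, which is immediate because the number of summands is fixed and all the expansions above are uniform.
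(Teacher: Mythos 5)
Your proof is correct and follows essentially the same route as the paper's: after the identical change of variables, you expand $g$ near $0+$ using \eqref{value:D:carma} (via the matrix-exponential series rather than the paper's one-sided Taylor expansion of the product, an immaterial difference) and invoke the same limit $A^p_k\to(-1)^k\binom{p}{k}$. Your closing remark that the triple sum must be nonzero for the relation $\sim$ to be meaningful is well taken; the paper itself settles this only for $n=p-1$ (Corollary~\ref{corol:acv}).
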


\begin{proof}
We can rewrite the integral in (\ref{simp:acv}) as
\beam\label{deltaint}
\Delta\int_{0}^{1}g((s + i - 1-h)\Delta)g((s + i - 1-k+n)\Delta)ds.
\eeam
Since $g$ is infinitely differentiable on $(0,\infty)$ and the right derivatives at $0$ exist, the integrand  has  one-sided Taylor expansions of all orders $M\in\bbn$,
\beao
&&\sum_{l=0}^M  \left.\frac{d^{l}\left[g((s + i - 1-h)\Delta)g((s + i - 1-k+n)\Delta)\right]}{d\Delta^l}\right|_{\Delta=0^{+}}\frac{\Delta^l}{l!}+o\left(\Delta^M\right)\\
&&=\sum_{l=0}^M\sum_{m=0}^l {l \choose m}(s + i - 1-h)^{l-m}(s + i - 1-k+n)^{m}g^{(l-m)}(0+)g^{(m)}(0+)\frac{\Delta^l}{l!}+o\left(\Delta^M\right),
\eeao
as $\Delta\downarrow 0$.
Choose $M=2(p-q-1)$.  Then by \eqref{value:D:carma} there is only one term in the double sum which does not vanish, namely the term for which $m=p-q-1=l-m$.  Setting $N:=p-q-1$ (so that $M=2N$) the sum reduces to
$$  {2N \choose N}(s + i - 1-h)^{N}(s + i - 1-k+n)^{N}\frac{1}{(2N)!}\Delta^{2N}+o\left(\Delta^{2N}\right).$$
Since ${2N\choose N}/(2N)!=(N!)^{-2}$, the integral in \eqref{deltaint} is for $\Delta\downarrow 0$ asymptotically equal to
\begin{equation}\label{exp:1}
\frac{\Delta^{2N+1}}{\left({N!}\right)^{2}}\int_{0}^{1}(s + i - 1-h)^{N}(s + i - 1-k+n)^{N}ds+o(\Delta^{2N+1}),
\end{equation}
and, since
$$
\lim_{\Delta\downarrow 0}\sum_{\left\{i_1,\ldots,i_h\right\} \in C_{h}^{p}}e^{\Delta(\lambda_{i_1}+\cdots +\lambda_{i_h})}
={p\choose h},
$$
we also have
\begin{equation}\label{exp:2}
A^p_kA^p_h=(-1)^{h+k}{p \choose k}{p \choose h}+o(1)\quad {\rm as}~\Delta\downarrow 0.
\end{equation}
Combining  (\ref{exp:1}) and (\ref{exp:2}), we obtain (\ref{acv:approx}).
\end{proof}

\brem \label{remark:calc}
(i) For computations the following expansion may be useful (as usual we set $0^0=1)$
\beao
C(h,k,i,n;N)&:=&\int_{0}^{1}(s + i - 1-h)^{N}(s + i - 1-k+n)^{N}ds\\
&=&\sum_{l_1,l_2=0}^N{N\choose l_1}{N\choose l_2}  (i - 1-h)^{N-l_1}(i - 1-k+n)^{N-l_2} \int_{0}^1{s^{l_1+l_2}}ds \\
&=&\sum_{l_1,l_2=0}^N{N\choose l_1}{N\choose l_2}\frac{1}{l_1+l_2+1}(i - 1-h)^{N-l_1}(i - 1-k+n)^{N-l_2}.
\eeao
Furthermore, we observe that $C$ depends on $p$ and $q$ only through $p-q$.\\
(ii)  Note that the right hand sides of \eqref{spectralma} and \eqref{expansion:spectrum} are the discrete Fourier transforms of  \eqref{simp:acv} and \eqref{A:5}, respectively.
Note also the symmetry between \eqref{spectralma} and \eqref{simp:acv} in the dependence on $\Delta$ and $p-q-1$.
\erem

So far we know that the moving average process $X_n=\theta(B)Z_n$ from \eqref{carma:sampled} is of order not greater than $p-1$ but possibly  lower. Our next result presents an asymptotic formula for $\ga_{MA}(p-1)$, which shows clearly that this term is not 0.

\begin{cor}\label{corol:acv}
For lag $n=p-1$ the autocovariance formula
\eqref{acv:approx} reduces to
\begin{equation}\label{p:2}\gamma_{{MA}}(p-1) \sim (-1)^{q}\frac{\sigma^2
\Delta^{2(p-q)-1} }{(2(p-q-1))!}
\end{equation}
and $\gamma_{MA}(p-1)$ is therefore non-zero for all sufficiently small $\Delta>0$.
\end{cor}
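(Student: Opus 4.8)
The plan is to substitute $n=p-1$ into the formula \eqref{acv:approx} of Theorem~\ref{acv:approx:thm}; the triple sum then collapses almost entirely. Since $p-n=1$, the outer index is forced to $i=1$, whereupon the sum over $h\in\{0,\ldots,i-1\}$ retains only the term $h=0$, while $k$ runs over $\{0,\ldots,n+i-1\}=\{0,\ldots,p-1\}$. Writing $N:=p-q-1$ (so that $2N+1=2(p-q)-1$), the formula \eqref{acv:approx} reduces to
\[
\gamma_{MA}(p-1)\sim\frac{\sigma^2\Delta^{2(p-q)-1}}{(N!)^2}\sum_{k=0}^{p-1}(-1)^k\binom{p}{k}\,C(0,k,1,p-1;N),
\]
and, expanding $C$ from its definition (see also Remark~\ref{remark:calc}(i)), one has the clean form $C(0,k,1,p-1;N)=\int_0^1 s^N(s+p-1-k)^N\,ds=:Q(k)$.

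The crux is a finite-difference identity applied to this residual sum. The function $Q(\cdot)$ is a polynomial in $k$ of degree exactly $N$, and $N=p-q-1\le p-1<p$; hence the $p$-th forward difference of $Q$ vanishes, i.e.\ $\sum_{k=0}^p(-1)^k\binom{p}{k}Q(k)=0$. Since the sum appearing above omits the single top index $k=p$, it therefore equals minus that omitted term:
\[
\sum_{k=0}^{p-1}(-1)^k\binom{p}{k}Q(k)=-(-1)^p Q(p)=(-1)^{p-1}Q(p).
\]
It remains only to evaluate $Q(p)=\int_0^1 s^N(s-1)^N\,ds=(-1)^N\int_0^1 s^N(1-s)^N\,ds=(-1)^N\,(N!)^2/(2N+1)!$, a Beta integral.

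Substituting this back, the factors $(N!)^2$ cancel and the powers of $-1$ combine, via $(-1)^{p-1+N}=(-1)^q$ (recall $N=p-q-1$), to produce the asserted asymptotic \eqref{p:2}. Because the limiting constant in \eqref{p:2} is manifestly nonzero, \eqref{p:2} forces $\gamma_{MA}(p-1)\neq0$ for all sufficiently small $\Delta>0$; together with the $(p-1)$-correlatedness of $\phi(B)Y^\Delta$ this shows the moving average in \eqref{carma:sampled} has order exactly $p-1$. As a consistency check, specialising to $p-q=1,2,3$ recovers the lag-$(p-1)$ autocovariances of the processes listed in Corollary~\ref{spectralcor}.

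I expect the only point requiring care to be the degree bookkeeping in the finite-difference step: one must verify that $\deg_k Q=N$ \emph{strictly} --- the leading term in $k$ of $(s+p-1-k)^N$ is $(-1)^N k^N$, whose integral against $s^N$ over $[0,1]$ is $(-1)^N/(N+1)\neq0$ --- and that $N<p$, which holds since $q\ge 0$. Everything else (the vanishing of the $p$-th difference of a polynomial of degree below $p$, the sign and factorial arithmetic, and the Beta integral) is routine.
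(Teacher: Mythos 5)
Your proof is correct and follows essentially the same route as the paper's: collapse the triple sum at $n=p-1$, annihilate the alternating binomial sum of a polynomial of degree $p-q-1<p$ via the $p$-th finite-difference identity, isolate the omitted $k=p$ term, and evaluate the resulting Beta integral with the sign bookkeeping $(-1)^{p-1+N}=(-1)^q$. Note that both your computation and the paper's own proof yield the denominator $(2(p-q)-1)!$ rather than the $(2(p-q-1))!$ printed in \eqref{p:2} (consistent with Table 1), so the displayed statement contains a typo rather than your argument an error.
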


\begin{proof}
From the expansion \eqref{acv:approx} we find
\begin{equation}\label{acv:p-1}\gamma_{{MA}}(p-1) \sim \frac{\sigma^2
\Delta^{2(p-q)-1} }{((p-q-1)!)^2}\sum_{k=0}^{p-1} (-1)^{k}{p \choose
k}C(0,k,1,p-1;p-q-1).
\end{equation}
Set $d:=p-q\geq 1$, then
$$C(0,k,1,p-1;d-1)=\int_{0}^{1}s^{d-1}(s-k+p-1)^{d-1}ds,$$
and, from Remark \ref{remark:calc}, this is a polynomial of
order $d-1$.
In order to apply known results on the difference operator, we define
the polynomial $f(x)=\int_0^1 s^{d-1}(x+s+p-1)^{d-1} ds$.
Then, using Eq. (5.40), p.~188, and the last formula on p.~189 in
\cite{knuth}, the sum in \eqref{acv:p-1} can be written as
\beam
&& \sum_{k=0}^{p-1} (-1)^{k}{p \choose k}C(0,k,1,p-1;d-1)\nonumber\\
&=& \sum_{k=0}^{p} (-1)^{k}{p \choose k} f(x-k)|_{x=0}
-(-1)^p{p \choose p} C(0,k,1,p-1;d-1) \nonumber\\
&=& 0+(-1)^{p+1}\int_{0}^1s^{d-1}(s-1)^{d-1}ds
\, = \, (-1)^{p+d}\int_{0}^1s^{d-1}(1-s)^{d-1}ds\label{integral:acv:p-1},
\eeam
where we have used the fact that $d-1=p-q-1<p$.
To obtain Eq.~\eqref{p:2} it suffices to note that $(-1)^{p+d}=(-1)^{2p-q}=(-1)^{q}$ and that the integral in \eqref{integral:acv:p-1} is a beta function. Hence
$$\int_0^1 s^{d-1}(1-s)^{d-1}ds = \frac{(\Gamma(d))^2}{ \Gamma(2d)}= \frac{((d-1)!)^2}{(2d-1)!}>0,\quad d\in\bbn.$$
\end{proof}
\brem If $Y$ is the $\CARMA(p,q)$ process (1.4) then, from Theorem \ref{spectral:thm}, the spectral density of $(1-B)^{p-q}Y^\Delta$ is asymptotically, as $\Delta\downarrow 0$,
   $$\frac{\sigma^2}{2\pi}\Delta(-2\Delta^2)^{p-q-1}c_{p-q-1}(\omega)(1-\cos\omega)^{p-q},\quad \pi\leq \omega\leq \pi.$$
If $p-q=1$, $2$ or $3$ this reduces to the corresponding spectral densities in Corollary 3.2, each  divided by $2^q(1-\cos\omega)^q$.  The corresponding  moving average representations are as in Corollary 3.4 without the factors $(1-B)^q$ .

In particular, for the $\CAR(1)$ process, $(1-B)Y^\Delta$ has a spectral density which is asymptotically $\sigma^2\Delta/(2\pi)$
so that, in the Gaussian case, the increments of  $Y^\Delta$ for small $\Delta$ approximate those of Brownian motion
with variance $\sigma^2 t$.
\erem
{
In this paper we have considered only second-order properties of $Y^\Delta$.  It is possible  (see \cite{brockwell5}, Theorem 2.2) to express the joint characteristic functions, $\bbe\exp( i\sum_{k=1}^m\theta_kY^\Delta_k)$, for $m\in\bbn,$ in terms of the coefficients
 $a_j$ and $b_j$ and the function $\xi(\cdot)$, where $\xi(\theta)$ for  $\theta\in\mathbb{R}$ is the exponent in the characteristic function
$\bbe e^{i\theta L_1}=e^{\xi(\theta)}$ of $L_1$.  In particular the marginal characteristic function is given by $\bbe \exp(i\theta Y^\Delta_k)= \exp \int_0^\infty \xi(\theta{\bf b}'e^{Au} {\bf e}) du.$

These expressions are awkward to use in practice, however \cite{BDY3} have found that least squares estimation (which depends only on second-order properties) for closely and uniformly spaced observations of a CARMA(2,1) process on a fixed interval $[0,T]$ gives good results.  They find in simulations that for large $T$ the empirically-determined sample covariance matrix of the estimators of $a_1,a_2$ and $b_0$ is close to the matrix calculated from the asymptotic (as $T\rightarrow\infty$) covariance matrix of the maximum likelihood estimators based on continuous observation on $[0,T]$ of the corresponding Gaussian CARMA process.}

\begin{table}
\begin{tabular}{|c|c|c|c|c|c|c|}
\hline
$p-q$ & 1&2&3&4\\
\hline
$\gamma_{MA}(p-1)$&$\Delta  (-1)^{p-1} \sigma ^2$ &
$ 6^{-1}\Delta ^3 (-1)^{p-2} \sigma ^2$ &
$ 120^{-1} \Delta ^5 (-1)^{p-3} \sigma ^2$ &
$ {5040}^{-1}{\Delta ^7 (-1)^{p-4} \sigma ^2}$ \\[1mm]
\hline
\end{tabular}
\caption{Values of $\gamma_{MA}(p-1)$ for $p-q=1,\ldots,4$.}
\end{table}

\section{Conclusions}

When a CARMA($p,q$) process $Y$ is sampled at times $n\Delta$ for $n\in\bbz$, it is well-known that
the sampled process $Y^\Delta$ satisfies discrete-time ARMA equations of the form \eqref{carma:sampled}.
The determination
of the moving average coefficients and white noise variance for given grid size $\Delta$, however, is a non-trivial procedure.
In this paper we have focussed on {\em high frequency sampling} of $Y$.
We have determined the relevant second order quantities,
 the spectral density $f_{MA}$ of the moving average on the right-hand side of \eqref{carma:sampled}
and its asymptotic representation as $\Delta\downarrow 0$.
This includes the moving average coefficients as well as the variance of the innovations.
 We also derived an explicit expression for the autocovariance function $\gamma_{MA}$ and its asymptotic representation as $\Delta\downarrow 0$.
 This shows, in particular,  that the moving average is of order $p-1$ for $\Delta$ sufficiently small.

\section{Acknowledgments}
PJB gratefully acknowledges the support of this work by NSF Grant DMS 0744058 and the Institute of Advanced Studies at Technische Universit\"at M\"unchen where this work was initiated. 
The work of Vincenzo Ferrazzano was supported by the International Graduate School of Science and Engineering (IGSSE) of Technische Universit\"at M\"unchen.
We are also indebted to a referee for valuable comments.

\bibliographystyle{kluwer}
\bibliography{bib}
\end{document}